\renewcommand*\sectionautorefname{\S\@gobble}
\renewcommand*\subsectionautorefname{\S\@gobble}
\renewcommand*\subsubsectionautorefname{\S\@gobble}
\mathchardef\mhyphen="2D
\DeclareMathOperator{\Hom}{Hom}
\DeclareMathOperator{\FinSet}{\mathbf{FinSet}}
\DeclareMathOperator{\End}{End}
\DeclareMathOperator{\Rep}{Rep}
\newcommand{\Vect}{\mathbf{Vect}}
\DeclareMathOperator{\ind}{Ind}
\DeclareMathOperator{\Id}{Id}
\DeclareMathOperator{\gr}{\mathbf{gr}}
\newcommand{\GL}{GL}
\newcommand{\CCC}{\mathcal{C}}
\newcommand{\CatAd}{\mathbf{2}\mhyphen\Vect_{\gr}}
\newcommand{\Hop}{\mathcal{F}}
\newcommand{\Heis}{\operatorname{Heis}}
\newcommand{\KH}{\mathcal{H}}
\newcommand{\NN}{\mathbbm{N}}
\newcommand{\ZZ}{\mathbbm{Z}}
\newcommand{\QQ}{\mathbbm{Q}}
\newcommand{\one}{\mathbbm{1}}
\newcommand{\kk}{\mathbbm{k}}
\newcommand{\CS}[1]{{\mathcal{C}^{\otimes #1}}}
\newcommand{\AAS}[1]{A^{\otimes #1}}
\newcommand{\PP}{\mathcal{P}}
\newcommand{\HH}{\mathcal{H}}
\newcommand{\EndAd}{\End_{ad}}
\newcommand{\fset}[1]{[#1]}
\newcommand{\monunit}{\mathbbm{1}}
\newcommand{\tik}{\begin{tikzcd}}
\newcommand{\tak}{\end{tikzcd}}
\def\latearrow#1#2#3#4{%
  \toks@\expandafter{\tikzcd@savedpaths\path[/tikz/commutative diagrams/every arrow,#1]}%
  \global\edef\tikzcd@savedpaths{%
    \the\toks@%
    (\tikzmatrixname-#2)% \noexpand\tikzcd@sourceanchor)%
    to%
    node[/tikz/commutative diagrams/every label] {$#4$}
    (\tikzmatrixname-#3)% \noexpand\tikzcd@targetanchor)
;}}
\def\stik#1#2{
\begin{tikzpicture}[baseline= (a).base]%
\node[scale=#1] (a) at (0,0){%
\begin{tikzcd}[ampersand replacement=\&]%
#2
\end{tikzcd}%
};%
\end{tikzpicture}
}
\def\ttstik#1#2{
\begin{tikzpicture}[baseline= (a).base]%
\node[scale=#1] (a) at (0,0){%
\begin{tikzcd}[ampersand replacement=\&,column sep=tiny]%
#2
\end{tikzcd}%
};%
\end{tikzpicture}
}
\def\nttstik#1#2{
\begin{tikzpicture}[baseline= (a).base]%
\node[scale=#1] (a) at (0,0){%
\begin{tikzcd}[ampersand replacement=\&,column sep=tiny,row sep=small]%
#2
\end{tikzcd}%
};%
\end{tikzpicture}
}
\newcommand{\mapstack}[2]{
\begin{tikzpicture}[anchor=base, baseline,inner sep=0, row sep=0]%
\node[scale=0.6] (b) at (0,0.3){
$#1$
};%
\node[scale=0.6] (a) at (0,0){%
$#2$
};%
\end{tikzpicture}
}
\def\equationautorefname~#1\null{(#1)\null}
\newtheorem{Theorem}{Theorem}
\newtheorem{Proposition}{Proposition}[section]
\newaliascnt{Corollary}{Proposition}
\newtheorem{Corollary}[Corollary]{Corollary}
\newaliascnt{Lemma}{Proposition}
\newtheorem{Lemma}[Lemma]{Lemma}
\theoremstyle{definition}
\newaliascnt{Definition}{Proposition}
\newtheorem{Definition}[Definition]{Definition}
\theoremstyle{remark}
\newaliascnt{Notation}{Proposition}
\newaliascnt{Remark}{Proposition}
\newtheorem{Remark}[Remark]{Remark}
\newaliascnt{Example}{Proposition}
\newtheorem{Example}[Example]{Example}
\begin{document}

%+Title
\title{Hopf categories and the categorification of the Heisenberg algebra via graphical calculus}
\author{Elena Gal}
\date{\today}
\maketitle
%-Title

%+Abstract
\begin{abstract}
   We explore the connection between the notion of Hopf category and the categorification of the infinite dimensional Heisenberg algebra via graphical calculus proposed by M.Khovanov. We show that the existence of a Hopf structure on a semisimple symmetric monoidal abelian category implies existence of a categorical action in the sense of Khovanov and thus leads to a strong categorification of this algebra. 
   \end{abstract}
%-Abstract

\numberwithin{equation}{section}
%+Contents
\tableofcontents
%-Contents
\section{Introduction}

The classical one-variable Heisenberg algebra is the $\ZZ$-algebra with two generators $p,q$ and one defining relation $[p,q]=1$. There are several infinite-dimensional generalizations that appear naturally in different settings. The most straightforward one is to consider two infinite families of generators $p_n, q_n, n\in \NN$ with the relations 
\begin{itemize}
\item
$p_0=q_0=1$
\item 
$[p_m,p_n]=[q_m,q_n]=0$
\item
$[p_m,q_n]=\delta_{mn}1$
\end{itemize} 
This algebra is a simplest example of a vertex operator algebra. It has a unique faithful irreducible representation called the Fock space.

Khovanov in \cite{Khov} proposed a categorification of a $\ZZ$-form of this algebra: the algebra $H$ generated over $\ZZ$ by two families of elements $a_n,b_n,n\in\NN$ where $a_0=b_0=1$, with $a_i, a_j$ and $b_i,b_j$ commuting for all $i,j \in \NN$ and with the relations 
\begin{equation}
\label{HeisRel}
[a_m,b_n]=b_{n-1}a_{m-1}
\end{equation}

He constructs a semisimple abelian category $\KH'$ whose Karoubi envelope $\KH$ is equipped with a natural embedding 
\begin{equation}
%\label{HeisIso}
\gamma:H\rightarrow K_0(\KH)
\end{equation}
defined by sending the generators of $H$ into certain elements of $K_0(\KH)$ and showing that the relations \eqref{HeisRel} hold. It is conjectured in \cite{Khov} that $\gamma$ is an isomorphism.

Using this category one can think of a categorical Fock space action as a functor from $\KH'$ to the category of endofunctors of an abelian category. An example of such action can be found in \cite{hongyacobiPoly} for the category $\PP$ of polynomial functors.

The categorification in \cite{Khov} is given in the form of graphical calculus. One downside of this approach is that although the graphical calculus for the category $\KH'$ are described explicitly, the graphical calculus for its Karoubi envelope $\KH$ are complicated to formulate and track. Therefore it is interesting to approach the definition of the Heisenberg algebra and its categorification from a different point of view. Namely, the infinite dimensional Heisenberg algebra and its Fock space action can be described without the use of generators using the Heisenberg double construction. 

In general, the Heisenberg double construction canonically associates an algebra $\Heis(A,A')$ to a dual pair of Hopf algebras $A,A'$. It comes with a canonical embedding into the algebra $\End(A)$ and hence a canonical faithful representation on the space $A$.
The Heisenberg algebra is obtained by taking $A$ and $A'$ to be the Hopf algebra $\Lambda$ of symmetric functions. The duality is given by the inner product on $\Lambda$, induced by considering the orthogonal basis of Schur polynomials. The multiplication of polynomials and its dual via this product give $\Lambda$ the structure of a self-dual (or \emph{selfadjoint}) Hopf algebra. The structure of self-dual Hopf algebras, and in particular different generator systems for $\Lambda$ were explored by Zelevinsky in the book \cite{Zelbook}.

A nice feature of this generator-less approach is that it produces all different $\ZZ$-forms of Heisenberg algebra by picking different elements as generators. For example the $\ZZ$-form categorified by Khovanov is obtained by setting 
$a_n$ to be the elementary symmetric function
of degree $n$ in the left $\Lambda$ and the $b_n$ to be the complete homogeneous symmetric
function of degree $n$ ($\sum_{i_1\leq\dots\leq i_n} x_{i_1}\cdots x_{i_n}$) in the right $\Lambda$. The $\ZZ$-form mentioned in the beginning of this introduction is obtained by taking $p_n$ to be elementary symmetric function of degree $n$ ($\sum_{i_1<\dots<i_n} x_{i_1}\cdots x_{i_n}$) in the left $\Lambda$ and letting the $q_n$ to be the primitive symmetric function of degree $n$ (see \cite{Zelbook}) in the right $\Lambda$. The algebra generated by these elements is isomorphic to $H$ over $\QQ$ but not over $\ZZ$. 
 
This approach to the Heisenberg algebra suggests an alternative idea for its categorification. 
In fact, the Hopf algebra structure on $\Lambda$ is induced by the isomorphism of algebras $\Lambda \cong \bigoplus_{n\geq 0} K(\Rep S_n)$. The multiplication in $\bigoplus_{n\geq 0} K(\Rep S_n)$ corresponds to the morphism induced by the induction functor and the inner product is given by dimension of the $\Hom$ spaces. Hence $\Lambda$ is a decategorification of a higher object, and so is the Heisenberg algebra. Formulating a working  definition for a higher Hopf algebra structure and using it to repeat the construction of Heisenberg double on the categorical level would give us a categorification of the Fock space.
 
In \cite{mySSH}(joint with A.Gal) we proposed a notion of selfadjoint Hopf structure for semisimple abelian categories. Using this structure we construct in \S 6.2 Theorem 2 of \cite{mySSH} an isomorphism categorifying the canonical relation defining the algebra structure on the Heisenberg double. We recall this construction here in \autoref{th:deltam} of \autoref{CatHeis}.
In particular we consider selfadjoint Hopf structure on the category $\PP$ of polynomial functors whose $K$-group (with the induced Hopf algebra structure) is isomorphic to $\Lambda$. Then this isomorphism categorifies the relations \eqref{HeisRel} between the generators of the algebra $H$.
Thus we obtain a categorification of the Fock space action of $H$ constructed from the Hopf category structure on the categorification of the Fock space $\Lambda$ itself. 

It is then natural to ask what is the relation between this categorification and the one proposed in \cite{Khov}. We explore this question in the present work. In \autoref{KhovCalculus} we show that the categorical Heisenberg action in the sense of \cite{mySSH} implies the existence of the functor 
\[F:\HH \rightarrow \End{\CCC}\] 
under an assumption on the adjunction data providing the selfadjoint Hopf structure on $\CCC$. Thus the Heisenberg categorification of \cite{mySSH} arising the Hopf category structure on a category $\CCC$ is in fact a strong categorification.

A nice property of the categorical Heisenberg double isomorphism from \autoref{th:deltam} is that it categorifies the defining relations of the Heisenberg algebra explicitly for any choice of elements. This is in contrast to categorifications using graphical calculus which are much less explicit due in particular to the requirement of working in the Karoubi envelope.  Philosophically speaking this is the consequence of the categorical Heisenberg double structure being the derivation of a relatively simple structure - the Hopf category structure on the underlying categorified Fock space. It is our hope that this fact will prove useful in working with more complicated vertex algebras, for example to categorify the Boson-Fermion correspondence.

The generators of Khovanov's category $\KH$ correspond in our construction to the adjoint functors of multiplication and comultiplication by an element of the Fock space. In principle for a selfadjoint Hopf category we can have two functors of comultiplication, corresponding to left and right adjoints of multiplication. They play different roles in the  categorification of the Heisenberg double relation from \autoref{th:deltam}. To reconstruct Khovanov's graphical calculus we must assume these adjoints to be equal.However there are interesting examples of Heisenberg doubles where they shouldn't be (see for example \cite{savageyacobi2015}) and it would be interesting to explore the categorification that our approach provides in those cases.

Another interesting direction is in considering the Heisenberg categorification arising from the braided monoidal category with Hopf structure, specifically the category $\bigoplus_{n\geq 0} \Rep \GL_n(\mathbb{F}_q)$. It is unclear what graphical calculus correspond to such categorification; however the generalization of the Heisenberg double construction seems to be quite straightforward. We expect some categorification of Hecke relations to arise as a result in \cite{BraidingWaldhausen}.

Finally, considering the Heisenberg double construction in the finite characteristic case should provide insights into the theory of modular representations of symmetric groups. 
%We apply the above observation to the Heisenberg action on the category selfadjoint Hopf category $\PP$ to give a short proof that the morphism \eqref{HeisIso} is in fact a morphism of algebras. In \cite{Khov} a proof of this fact is indicated using a generalization of the graphical calculus to work with the objects of the Karoubi envelope $\KH$. In this work the proof is an \autoref{MorphismofAlgebras} of \autoref{}.

\subsection{Notations}
\begin{itemize}
\item $\kk$ - a field of characteristic 0
\item $\Vect$ - the category of finite dimensional vector spaces over $\kk$
%\item $S,T,\ldots$ - finite sets
\item $\fset{n}$ - the set $\{1,2,\ldots,n\}$
\end{itemize}
\section{Hopf categories and a categorical Heisenberg double}
\label{HopfHeis}
In this section we review the notions of the symmetric selfadjoint Hopf category and the associated categorical Heisenberg double constructed in \cite{mySSH}.
\subsection{Self-adjoint Hopf categories}
\label{HopfCat}
The symmetric selfadjoint Hopf (SSH) category is a categorification of the notion of positive selfadjoint Hopf (PSH) algebra introduced by A.Zelevinsky in \cite{Zelbook}. 
\begin{Definition}
\label{def:PSH}
A PSH algebra is a graded connected Hopf $\ZZ$-algebra with an inner product and a distinguished finite orthogonal $\ZZ$ basis in each grade such that multiplication and comultiplication are adjoint and positive maps.
\end{Definition}
The examples of such algebras naturally appear as $K$-groups of categories, hence one would like to define a similar notion on the level of categories. The idea for the definition of the SSH category comes from the observation that the Hopf condition can be visualized as the diagram 
\begin{equation}
\label{Hopfsquare}
\stik{1}{
\AAS{4} \arrow[]{r}{\overline m} \arrow[leftarrow]{d}[left]{\Delta^{\otimes 2}}\& \AAS{2} \arrow[leftarrow]{d}{\Delta}\\
\AAS{2} \arrow[]{r}{m} \& A}
\end{equation}
where
\begin{align*}
\overline{m}(x\otimes y\otimes z \otimes w)&=m(x\otimes z)\otimes m(y \otimes w)\\
\Delta^{\otimes 2}(x \otimes y)&=\Delta(x)\otimes \Delta(y)
\end{align*}
In the case of the PSH algebra this square can be obtained from the commutative square of multiplications
\begin{equation}
\label{Hopfsquaremult}
\stik{1}{} \stik{1}{
\AAS{4} \arrow[]{r}{\overline m} \arrow[]{d}[left]{m^{\otimes 2}}\& \AAS{2} \arrow[]{d}{m}\\
\AAS{2} \arrow[]{r}{m} \& A
}
\end{equation}
by replacing the verticals with their adjoints.

We utilize this observation as follows. On the categorical level we should require commutativity of the square \eqref{Hopfsquare} up to isomorphism. Such isomorphism will have to satisfy coherence relations that in general are difficult to specify and track. Instead we will consider it as property of the square of the form \eqref{Hopfsquaremult}. On the categorical level we will need to replace tensor of algebras with Deligne tensor of categories and maps with functors. We will also assume that our square commutes up to an isomorphism $\alpha$.

The procedure of of replacing two opposite sides of the square of functors with adjoints is called taking the \emph{mate} of the square. This is the basic tool we are going to use through the article to construct categorification of Heisenberg algebra.

\begin{Definition}
\label{mate}
Given a square
\begin{equation}
\label{bcsquarebefore}
\tik
A\arrow[]{d}{f} \arrow[]{r}{g}& B \arrow[]{d}{h} \arrow[shorten >=0.4cm,shorten <=0.4cm,Rightarrow]{dl}[above,sloped]{\alpha} \\ 
C \arrow[]{r}{i} & D
\tak
\end{equation}
where $\alpha:h\circ g\rightarrow i\circ f$ is a (not necessarily invertible) 2-morphism we obtain by replacing the the verticals $h,f$ with their right adjoints $h_R,f_R$  the \emph{right mate} of the above square
\[
\tik
A \arrow[]{r}{g}\arrow[blue,shorten >=0.4cm,shorten <=0.4cm,Rightarrow]{dr}[above,sloped]{\alpha_R}& B   \\ 
C \arrow[dashed,blue,thick]{u}{f_R} \arrow[]{r}{i} & D \arrow[dashed,blue,thick]{u}[right]{h_R}
\tak
\]
where $\alpha_R$ is given by
\[
g\circ f_R\rightarrow h_R\circ h \circ g \circ f_R \xrightarrow{\alpha} h_R \circ i \circ f \circ f_R \rightarrow h_R \circ i
\]
\end{Definition}
\begin{Definition}
A square as in \eqref{bcsquarebefore} with 2-morphism $\alpha$ is said to satisfy the right \emph{Beck-Chevalley condition} if $\alpha_R$ is invertible.
\end{Definition}

\begin{Remark}
The left mate and Beck-Chevalley condition is defined in a similar way.
\end{Remark}

Our next observation is that the square \eqref{Hopfsquaremult} is related to the Cartesian square of finite sets 
\begin{align*}
\stik{1}{
\AAS{4} \arrow[]{r}{\overline m} \arrow[]{d}[left]{m^{\otimes 2}} \& \AAS{2} \arrow[]{d}{m}\\
\AAS{2} \arrow[]{r}{m} \& A
}
& {} & \leftarrow & {} &
\stik{1}{
\fset{4} \arrow[]{r}{} \arrow[]{d}[left]{} \& \fset{2} \arrow[]{d}{} \ar[draw=none]{dl}[above,yshift=-0.5pc,xshift=-0.8pc,scale=2]{\ulcorner}\\
\fset{2} \arrow[]{r}{} \& \fset{1}
}
\end{align*}

In fact, as explained in \S 3.1 of \cite{mySSH} any Cartesian square of finite sets should correspond to a square of multiplications satisfying the Beck-Chevalley condition in a categorification of Hopf algebra. Altogether this leads to the following definition of symmetric selfadjoint Hopf category:

\begin{Definition}
\label{HopfCategory}
A symmetric selfadjoint Hopf structure for a semisimple abelian $k$-linear category $\CCC$ is given by a monoidal functor of 2-categories
\[\FinSet\xrightarrow{\Hop}\CatAd\]
which takes Cartesian squares in $\FinSet$ to squares satisfying the Beck-Chevalley condition and such that $\Hop(\fset{1})=\CCC$.
\end{Definition}
Here $\CatAd$ has objects graded 2-vector spaces as in \cite{2Vect}, i.e. semisimple $\kk$-categories equivalent to a finite sum of copies of $\Vect$. The 1-morphisms are finite sums of exact morphisms of bounded degree and the 2-morphism are the natural transformations between those. The monoidal structure is given by the Deligne tensor of categories.
\begin{Remark}
1-morphisms in $\CatAd$ admit left and right adjoints.
\end{Remark}
In particular \autoref{HopfCategory} implies that we have the functor $m:\CS{2} \rightarrow \CCC$ as an image of the map of finite sets $\fset{2}\rightarrow \fset{1}$ and its left and right adjoints $\Delta^l,\Delta^r: \CS{2} \rightarrow \CCC$. The naive categorical version of the bialgebra condition is a consequence of the image
\begin{equation}
\label{HopfRel}
\tik
\CS{4} \arrow[]{r}{\overline m} \arrow[]{d}[left]{m^2}& \CS{2} \arrow[]{d}{m}\arrow[shorten >=0.4cm,shorten <=0.4cm,Rightarrow]{dl}[above,sloped]{\sim}\\
\CS{2} \arrow[]{r}{m} & \CCC
\tak
\end{equation}
of the Cartesian square 
\[
\stik{1}{
\fset{4} \arrow[]{r}{} \arrow[]{d}[left]{} \& \fset{2} \arrow[]{d}{} \ar[draw=none]{dl}[above,yshift=-0.5pc,xshift=-0.8pc,scale=2]{\ulcorner}\\
\fset{2} \arrow[]{r}{} \& \fset{1}
}
\]
satisfying the Beck-Chevalley condition, i.e. the square commutes up to invertible morphism when the horizontal or vertical sides are replaces by left(right)adjoints:

Here $\overline m$ corresponds to the maps of sets $\mapstack{1\mapsto 1}{3\mapsto 1},\mapstack{2\mapsto 2}{4\mapsto 2}$.
\begin{Remark}
\label{SSHbraiding}
The restriction of the 2-morphism \autoref{HopfRel} provides a braiding isomorphism for $\CCC$, and as checked in Proposition 3.18 of \cite{mySSH} this provides $\CCC$ with symmetric monoidal structure.
\end{Remark}
\begin{Remark}
\label{SSHunit}
By Proposition 3.17 of \cite{mySSH} 
$\Hop(\emptyset)$ is equivalent to $\Vect$. The image of the map $\emptyset\rightarrow 1$ in $\FinSet$ equips an SSH category $\CCC$ with a functor $\Vect \xrightarrow{m_{\emptyset}}\CCC$. $m_{\emptyset}(\kk) \in \CCC$ categorifies the notion of the unit of the Hopf algebra.
\end{Remark}

A simplest example of an SSH category is the category of polynomial functors $\PP$ from \cite{FrieSusPoly}. \begin{Definition}
The category of polynomial functors $\PP$ is the category whose objects are functors from $\Vect$ to $\Vect$ that induce polynomial maps of the $\Hom$ spaces.
\end{Definition}
The SSH structure on $\PP$ is described in \S 4 of \cite{mySSH}. To give a simple description illustrating our abstract definition, the Deligne tensor $\PP^{\otimes2}$ can be represented by a category of bi-polynomial functors, i.e. the functors 
\[\Vect\times\Vect \rightarrow\Vect\] Then we have 
\begin{align*}
&m:\PP^{\otimes 2}\rightarrow\PP, 
& m\Phi(V)&=\Phi(V,V) &\Delta: \PP\rightarrow \PP^{\otimes 2}, \Delta F(V,W)=F(V\oplus W)
\end{align*}
and the unit is represented by the polynomial functor $\one:\kk\rightarrow V$. From this examples it can be easily seen how to construct an image of any map of finite sets and its adjoint.

The SSH category $\PP$ categorifies the PSH algebra $\Lambda$ of symmetric functions.
\subsection{Heisenberg double}
\label{Heisenbergdouble}
A dual pair of graded Hopf algebras $A,A'$ gives rise to an algebra structure on the space $A\otimes A'$.  This algebra is called the \emph{Heisenberg double}. For our purposes we will view this algebra structure as induced from the algebra structure on $\End_{\ZZ}(A)$ via the injective morphism of ${\ZZ}$-modules $A\otimes (A')^{op} \rightarrow\End_{\ZZ}(A)$. This morphism is given by considering the action of $A$ on itself by left multiplication by an element and the action of $A'$ on $A$ adjoint to the right multiplication by an element.

Let us consider this situation in more detail. We consider $A=
\bigoplus_{n\geq 0}A_n$, $A'=\bigoplus_{n\geq 0}A'_n$ with components free and finitely generated over $\ZZ$. Let $<,>$ be a pairing between these algebras that induces isomorphism $A'\cong\bigoplus_{n\geq 0}A_n^*$ and respects the Hopf structures. That is $<,>$ satisfies
\begin{gather*}
\left<m(a_1\otimes a_2),b\right>=\left<a_1\otimes a_2,\Delta ' b\right>\\
\left<\Delta a,b_1\otimes b_2\right>=\left<a,m'(b_1\otimes b_2)\right>\\
\left<a,1_{A'}\right>=\varepsilon(a), \left<1_{A},b\right>=\varepsilon(b)
\end{gather*}
where $m,m',\Delta,\Delta'$ are the morphisms of multiplication and comultiplication in $A$ and $A'$:
\begin{align*}
m: A\otimes A &\rightarrow A &\Delta:A&\rightarrow A\otimes A\\
m': A'\otimes A' &\rightarrow A' &\Delta':A'&\rightarrow A'\otimes A'
\end{align*}
For each $x\in A$ we can then consider the operator $m_x: A\rightarrow A$ of left multiplication by an element $x \in A$. We also consider the operator $\Delta_y: A\rightarrow A, y\in A'$. $\Delta_y$ is adjoint to the operator of right multiplication by $y, m'_y:A'\rightarrow A'$ via the pairing $<,>$.

The assigments $x \rightarrow m_x$, $y \rightarrow \Delta_y$ define morphisms of algebras $A, (A')^{op} \rightarrow \End_{\ZZ}{A}$. We can also use these operators to define the following morphism of $\ZZ$-modules: 
\begin{align*}
\varphi:A\otimes A'^{op} &\rightarrow\End_\ZZ(A) &x\otimes y &\mapsto m_x\Delta_y
\end{align*}
\begin{Proposition}[see e.g. \cite{mySSH} Proposition 5.4]
\label{prop:heisdouble}
$\varphi$ is injective and its image is a subalgebra of $End_{\ZZ}(A)$
\end{Proposition}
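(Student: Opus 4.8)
The plan is to treat the two assertions separately, both resting on an explicit description of the annihilation operator. First I would record that, writing $\Delta a = \sum a_{(1)} \otimes a_{(2)}$ in Sweedler notation, the adjunction defining $\Delta_y$ together with the Hopf compatibility $\langle a, by\rangle = \langle \Delta a, b \otimes y\rangle$ gives the closed formula $\Delta_y(a) = \sum a_{(1)}\langle a_{(2)}, y\rangle$. In particular $\Delta_1 = \mathrm{id}$ and $m_1 = \mathrm{id}$, so the image of $\varphi$ contains the identity, and since $\varphi$ is a map of $\ZZ$-modules its image is automatically a $\ZZ$-submodule; it therefore remains to prove injectivity and closure under composition.

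For injectivity I would exploit the grading. Since $m_x$ with $x \in A_p$ raises degree by $p$, while $\Delta_y$ with $y \in A'_q$ lowers degree by $q$ and annihilates $A_n$ for $n < q$, I decompose any $\omega = \sum_i x_i \otimes y_i$ into bihomogeneous pieces $\omega_{p,q}$ and let $q_0$ be the least $A'$-degree occurring in a nonzero piece. Restricting $\varphi(\omega)$ to $A_{q_0}$ kills every contribution with $\deg y_i > q_0$, so only the terms with $\deg y_i = q_0$ survive. For such a term and $a \in A_{q_0}$, connectedness of the grading ($A_0 = \ZZ\cdot 1$) forces $\Delta_{y_i}(a) = \langle a, y_i\rangle\, 1$, since the only $A_0 \otimes A_{q_0}$ component of $\Delta a$ is $1 \otimes a$; hence $\varphi(\omega)|_{A_{q_0}}(a) = \sum_{\deg y_i = q_0}\langle a, y_i\rangle\, x_i$. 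Grouping by $\deg x_i$ and taking the $x_i$ in each degree to be linearly independent, this vanishing forces $\langle a, y_i\rangle = 0$ for all $a \in A_{q_0}$, and nondegeneracy of the pairing on $A_{q_0} \times A'_{q_0}$ then yields $y_i = 0$, contradicting the minimality of $q_0$ unless $\omega = 0$.

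For the subalgebra claim the crux is the straightening relation that moves an annihilation past a creation. I would compute $\Delta_y m_{x'}$ on $a$ directly: because $\Delta$ is an algebra map, $\Delta(x'a) = \sum x'_{(1)}a_{(1)} \otimes x'_{(2)}a_{(2)}$, and splitting the pairing through $\Delta'$ gives $\Delta_y m_{x'} = \sum \langle x'_{(2)}, y_{(1)}\rangle\, m_{x'_{(1)}}\Delta_{y_{(2)}}$. Combined with $m_x m_{x'} = m_{xx'}$ and $\Delta_y\Delta_{y'} = \Delta_{yy'}$ — the latter since $\Delta_y$ is adjoint to right multiplication and adjunction reverses composition — this produces $(m_x\Delta_y)(m_{x'}\Delta_{y'}) = \sum\langle x'_{(2)}, y_{(1)}\rangle\, m_{xx'_{(1)}}\Delta_{y_{(2)}y'}$, which is manifestly in the image of $\varphi$; closure under composition follows.

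The main obstacle is the straightening relation itself: all the bookkeeping lives in keeping the two Sweedler decompositions and the $A'^{op}$ convention consistent, and in checking that the resulting normal-ordered expression genuinely lands in $A \otimes A'^{op}$ rather than merely in $\End_{\ZZ}(A)$. The grading and nondegeneracy inputs to injectivity are routine once the formula for $\Delta_y$ is in hand; the real content is that the two Hopf compatibilities are precisely what is needed to re-express a mixed product of creations and annihilations in normal-ordered form.
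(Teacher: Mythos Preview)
Your argument is correct and, for the subalgebra claim, follows essentially the same route as the paper: both rest on the normal-ordering identity obtained from the Hopf compatibility. The paper packages this as the operator relation $\Delta_x m = m\,\Delta^2_{\Delta'(x)}$ on $A^{\otimes 2}\to A$ and then specializes by precomposing with $i_y$, whereas you compute $\Delta_y m_{x'}$ directly; unwinding either formulation gives the same coefficient $\langle x'_{(2)}, y_{(1)}\rangle$ in front of $m_{x'_{(1)}}\Delta_{y_{(2)}}$, so the content is identical. The paper's phrasing has the mild advantage that the relation $\Delta_x m = m\,\Delta^2_{\Delta'(x)}$ is exactly the statement that gets categorified later, while your version makes the ``image is closed under composition'' conclusion immediate.

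For injectivity the paper simply cites the external reference and gives no argument; your grading-and-nondegeneracy proof (reduce to the minimal $A'$-degree, use connectedness to evaluate $\Delta_{y_i}$ on $A_{q_0}$ as a scalar, then invoke nondegeneracy of the pairing) is the standard one and is correctly carried out.
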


Since $\varphi$ is injective, it induces an algebra structure on $A\otimes A'$. This algebra is called \emph{the Heisenberg double}. It is equipped with a canonical action on $A$ defined by $\varphi$. The analog of Stone-von Neumann theorem for this action is considered in \cite{savageyacobi}.

A special case of this construction is a Heisenberg double associated to a PSH algebra of \cite{Zelbook}. In the case of the simplest PSH algebra $\Lambda$ of symmetric functions this construction recovers the infinite dimensional Heisenberg algebra. More precisely to get the particular $\ZZ$- form of Heisenberg algebra one should choose a suitable set of generators in the Heisenberg double associated to $\Lambda$. To recover the form given by the relations \autoref{HeisRel} which was categorified by Khovanov in \cite{Khov} one should consider series of generators for $n \geq 0$: $a_n$ in the left $\Lambda$ and $b_n$ in the right $\Lambda$ with $a_n$ being the elementary symmetric function
of degree $n$ and the $b_n$ the complete homogeneous symmetric function of degree $n$, i.e. $b_n=\sum_{i_1\leq\dots\leq i_n} x_{i_1}\cdots x_{i_n}$. In this article we will focus on categorification of Heisenberg doubles associated to PSH algebras. We will denote Heisenberg double associated to a PSH algebra $A$ by $\Heis{A}$

To explain how to approach the notion of Heisenberg double using SSH categories we briefly recall the key idea of the proof of \autoref{prop:heisdouble}. To prove that the image of $\varphi$ is a subalgebra we show that the following relation holds $\forall x\in A'$
\begin{align}
\label{eqn:deltam}
\Delta_xm = m\Delta^2_{\Delta'(x)}
\end{align}
where if $\Delta'(x)=x_{(1)}\otimes x_{(2)}$ (in Sweedler notation) then 
\begin{align}
\label{DeltaSquared}
\Delta^2_{\Delta'(x)}(y\otimes z):=\Delta_{x_{(1)}}y\otimes\Delta_{x_{(2)}}z
\end{align}
This implies that the image of $\varphi$ is closed on multiplication in the algebra $\End_\ZZ(A)$ as $\forall x \in A, y\in A'$:
\begin{align*}
\Delta_xm_y &= m\Delta^2_{\Delta'(x)}i_y=m_{\Delta_{x_{(2)}}(y)}\Delta_{x_{(1)}}\in\varphi(A\otimes A') 
\end{align*}
where 
\[i_y(a)=y\otimes a\]

\begin{Remark}
The relation $\Delta_xm = m\Delta^2_{\Delta(x)}$ can be interpreted as a statement about commutativity of a square of morphisms: 
\begin{equation}
\tik
A^{\otimes 2} \arrow[]{r}[above]{m}   & A   \\
A^{\otimes 2} \arrow[]{u}{(\Delta)^2_{\Delta'(x)}} \arrow[]{r}[below]{m} & A \arrow[]{u}[right]{\Delta_x}
\tak
\end{equation}
This suggests an approach to categorification of Fock space action using a Hopf category structure and the Beck-Chevalley condition that we describe in the next section.
\end{Remark}
%%%%%%%%%%%%%%%%%%%%%%%%%%%%%%%%%%%%%%%%%%%%%%%%%%%%%%%%%%%%%%%%%%%%%%%%%%%%%%%
%%%%%%%%%%%%%%%%%%%%%%%%%%%%%%%%%%%%%%%%%%%%%%%%%%%%%%%%%%%%%%%%%%%%%%%%%%%%%%%
\subsection{Categorical Heisenberg double}
\label{CatHeis}
%Recap of Heisenberg doubles from Hopf, SSH and categorical Heisenberg double with detailed description of the construction of the lift of Heisenberg relation
%$\CCC$ $k$-linear semisimple abelian category. Introduce $m$,$\Delta$ 
In this section we will construct a categorification of the notion of Heisenberg double and the associated Fock space for a symmetric selfadjoint Hopf category. Consider the $k$-linear semisimple abelian category $\CCC$ with Hopf structure given by \autoref{HopfCategory}. Then in particular we have the functor $m:\CS{2} \rightarrow \CCC$ and its left and right adjoints $\Delta^l,\Delta^r: \CS{2} \rightarrow \CCC$.

Consider the functor $\boxtimes:\CCC^{\times 2}\rightarrow\CS{2}$ which is part of the definition of the Deligne tensor. For any object $F\in\CCC$ define the functor 
\begin{align*}
i_F:\CCC & \rightarrow\CS{2}, & i_F(X)=F\boxtimes X
\end{align*}
Note that $i_F$ has left and right adjoints which we denote by $j^l_F,j^r_F$
We define $m_F,\Delta^l_F,\Delta^r_F$ as compositions
\begin{align*}
m_{F}&=m\circ i_F & \Delta^r_F&=j^r_F\circ\Delta^r & \Delta^l_F&=j^l_F\circ\Delta^l 
\end{align*} 
$\Delta^r_F, \Delta^l_F$ are respectively right and left adjoint to $m_F$ for any $F \in \CCC$. 

In a similar fashion, using the fact that Deligne tensor defines a symmetric monoidal structure on the 2-category $\CatAd$,
we can define adjoint functors $\Delta^2_{\Delta F}\dashv m^2_{\Delta^r F}\dashv \Delta^2_{\Delta F}$ for $F \in \CCC$ categorifying the map \autoref{DeltaSquared} we used to define Heisenberg double for Hopf algebras.

%functors of the form $i_{F_1\boxtimes F_2}:\CCC^{\times 2}\rightarrow\CS{4}, i_{F_1\boxtimes F_2}{G_1 \boxtimes G_2}= F_1\boxtimes G_1\boxtimes F_2\boxtimes G_2$ give rise to a functors  $i_\Phi:\CS{2}\rightarrow \CS{4}$ where $\Phi\in\CS{2}$.

Our categorification of the notion of Heisenberg double is provided by the following:

\begin{Theorem}[\cite{mySSH}]
\label{th:deltam}
There is a canonical isomorphism 
\begin{equation}
\label{deltam}
\Delta^r_F m\cong m\circ (\Delta^r)^2_{\Delta^l(F)}
\end{equation}
coming from the SSH structure on $\CCC$.
\end{Theorem}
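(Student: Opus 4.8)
The plan is to obtain \eqref{deltam} by taking mates of the fundamental Hopf square \eqref{HopfRel} and combining the result with a compatibility between the multiplication functors and the insertion functors $i_F$. Write $\Delta^r = m^R$ for the right adjoint of $m$, so that by construction $\Delta^r_F = j^r_F\circ\Delta^r$ with $j^r_F = i_F^R$, and, writing $\Delta^l F = F_{(1)}\boxtimes F_{(2)}$ in Sweedler notation, the functor $(\Delta^r)^2_{\Delta^l F}$ factors as
\[
(\Delta^r)^2_{\Delta^l F} \;=\; \bigl(j^r_{F_{(1)}}\boxtimes j^r_{F_{(2)}}\bigr)\circ(\Delta^r)^2,
\]
the first factor being the right adjoint of the doubled insertion $i^2_{\Delta^l F} := i_{F_{(1)}}\boxtimes i_{F_{(2)}}$.

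First I would record the right mate of \eqref{HopfRel}. Replacing the two vertical multiplications $m^2$ and $m$ by their right adjoints $(\Delta^r)^2$ and $\Delta^r$, the Beck--Chevalley condition built into \autoref{HopfCategory} (the square is the image of a Cartesian square of finite sets) makes this mate invertible, giving a canonical isomorphism $\Delta^r\circ m\cong\overline m\circ(\Delta^r)^2$ of functors $\CS{2}\to\CS{2}$. This is the categorical shadow of the bialgebra identity $\Delta(ab)=\Delta(a)\,\Delta(b)$ used in the classical proof of \eqref{eqn:deltam}.

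Second I would establish the parameter--insertion compatibility $j^r_F\circ\overline m\cong m\circ\bigl(j^r_{F_{(1)}}\boxtimes j^r_{F_{(2)}}\bigr)$ of functors $\CS{4}\to\CCC$. I would produce this as the right mate of the square with horizontal arrows the insertions $i^2_{\Delta^l F}$ and $i_F$ and vertical arrows $m$ and $\overline m$, whose (non-invertible) filling $2$-cell is supplied by the unit of the adjunction $\Delta^l\dashv m$ evaluated at $F$; this is exactly where the \emph{left} comultiplication $\Delta^l F$ enters rather than $\Delta^r F$. The claim is then that this insertion square also satisfies Beck--Chevalley, so its mate is an isomorphism. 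Categorically this is the pairing identity $\langle ac,F\rangle=\langle a\otimes c,\Delta F\rangle$: contracting a product against $F$ agrees with contracting the two tensor factors against the comultiplication of $F$.

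Composing the two isomorphisms finishes the proof: using $\Delta^r_F = j^r_F\circ\Delta^r$ and the first isomorphism gives $\Delta^r_F\circ m\cong j^r_F\circ\overline m\circ(\Delta^r)^2$, and then the second isomorphism together with the factorization of $(\Delta^r)^2_{\Delta^l F}$ gives $j^r_F\circ\overline m\circ(\Delta^r)^2\cong m\circ(\Delta^r)^2_{\Delta^l F}$, which is \eqref{deltam}. The main obstacle is the second step: setting up the doubled insertion functors and their adjoints inside the symmetric monoidal $2$-category $\CatAd$ and verifying the Beck--Chevalley condition for the insertion square --- keeping track of the interleaving and symmetry coherences of the Deligne tensor --- is the delicate point, and it is also what forces $\Delta^l$ (rather than $\Delta^r$) into the statement. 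The cleanest route is probably to exhibit the insertion square, suitably decorated by $F$, as arising from a Cartesian square of finite sets, so that its Beck--Chevalley property is again inherited from \autoref{HopfCategory}.
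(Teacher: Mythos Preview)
Your approach is correct and coincides with the paper's: the isomorphism \eqref{deltam} is obtained as the right mate of the square \eqref{heisquare}, which the paper decomposes (see \eqref{HeisDecomp}) exactly into your two pieces---the Hopf square \eqref{HopfRel} (Beck--Chevalley from the SSH axiom) stacked on an insertion square whose filling 2-cell is built from the unit of $\Delta^l\dashv m$ (Beck--Chevalley verified separately). Two small corrections: in the paper's orientation the insertions $i_F$ and $i_{\Delta^l F}$ are the \emph{vertical} arrows of that upper square, so that taking the right mate is what replaces them by $j^r_F$ and $j^r_{\Delta^l F}$; and your closing suggestion to realize the insertion square as the image of a Cartesian square in $\FinSet$ will not work as stated, since $i_F$ depends on the object $F\in\CCC$ and is not in the image of $\Hop$---the paper simply asserts that Beck--Chevalley for this square can be checked directly from the unit construction.
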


\begin{Corollary}
We have a canonical isomorphism \[
\Delta^r_Fm_G=\Delta^r_F m\circ i_G \cong m\circ (\Delta^r)^2_{\Delta^l(F)}\circ i_G
\]
\end{Corollary}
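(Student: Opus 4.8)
The plan is to obtain this statement as a direct formal consequence of \autoref{th:deltam}, by whiskering the canonical isomorphism \eqref{deltam} with the functor $i_G$. First I would record the asserted equality $\Delta^r_F m_G = \Delta^r_F m \circ i_G$: by the definition preceding the theorem we have $m_G = m \circ i_G$, where $i_G\colon \CCC \to \CS{2}$ sends $X \mapsto G \boxtimes X$, so that $\Delta^r_F m_G = \Delta^r_F \circ (m \circ i_G) = (\Delta^r_F \circ m) \circ i_G$ by associativity of composition of $1$-morphisms in $\CatAd$. This is nothing more than a rebracketing and requires no input beyond the definitions.

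The substantive step is then to transport the isomorphism of \autoref{th:deltam} along $i_G$. Writing $\eta\colon \Delta^r_F m \xrightarrow{\sim} m \circ (\Delta^r)^2_{\Delta^l(F)}$ for the canonical $2$-isomorphism between the two $1$-morphisms $\CS{2} \to \CCC$, I would whisker $\eta$ on the right by the $1$-morphism $i_G$, i.e.\ form the horizontal composite $\eta \ast \operatorname{id}_{i_G}$. This produces a $2$-morphism $(\Delta^r_F m)\circ i_G \to \big(m \circ (\Delta^r)^2_{\Delta^l(F)}\big)\circ i_G = m \circ (\Delta^r)^2_{\Delta^l(F)} \circ i_G$, and because horizontal composition with an identity carries invertible $2$-morphisms to invertible ones, the result is again an isomorphism. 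Its canonicity is inherited from that of $\eta$, whiskering being a functorial operation on $2$-morphisms.

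The only point that needs genuine attention --- and the closest thing here to an obstacle --- is the bookkeeping of sources, targets, and composition order: I would verify that the string of domains $\CCC \xrightarrow{i_G} \CS{2} \xrightarrow{(\Delta^r)^2_{\Delta^l(F)}} \CS{2} \xrightarrow{m} \CCC$ composes as claimed, so that $i_G$ genuinely feeds the ``outer'' $\CCC$-variable of $(\Delta^r)^2_{\Delta^l(F)}$ and the whiskering on the right is indeed the correct operation applied to both sides of \eqref{deltam}. Once this is confirmed, no new coherence data beyond those already supplied by \autoref{th:deltam} are required, and the Corollary follows.
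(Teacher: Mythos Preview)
Your proposal is correct and matches the paper's approach: the paper states the Corollary immediately after \autoref{th:deltam} with no separate proof, treating it as the trivial consequence you describe---unfold $m_G=m\circ i_G$ and precompose the canonical isomorphism \eqref{deltam} with $i_G$. Your type-checking of the composite $\CCC \xrightarrow{i_G} \CS{2} \xrightarrow{(\Delta^r)^2_{\Delta^l(F)}} \CS{2} \xrightarrow{m} \CCC$ is accurate, and nothing beyond whiskering is needed.
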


Since as shown in \cite{mySSH} the $K$-group of $\CCC$ is a positive selfadjoint Hopf algebra, we can associate to it a Heisenberg double and its Fock space. Then \autoref{th:deltam} implies the following:
\begin{Corollary}
\label{Th:HeisCat}
Let $\CCC$ be an SSH category and denote by $\EndAd(\CCC)$ the category of endofunctors of $\CCC$ admitting adjoints.\\
The functor $\CCC\otimes\CCC^{op}\rightarrow \EndAd(\CCC)$ given by $F\boxtimes G\mapsto m_F\circ \Delta^r_G$ can be naturally constructed using the SSH structure on $\CCC$ and descends to the Fock space representation of the Heisenberg double associated to $(K(\CCC))$.
\end{Corollary}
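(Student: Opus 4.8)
The plan is to realize the isomorphism \eqref{deltam} as a composite of two mates, each produced by the Beck--Chevalley condition built into \autoref{HopfCategory}, with the object $F$ inserted through the adjoints of the functor $i_F$. As a compass, I would first decategorify the target relation $\Delta^r_F m\cong m\circ(\Delta^r)^2_{\Delta^l(F)}$: it recovers $\Delta_x(yz)=\sum(\Delta_{x_{(1)}}y)(\Delta_{x_{(2)}}z)$, whose proof rests on exactly two facts — the bialgebra axiom $\Delta(yz)=\Delta(y)\Delta(z)$ and the duality $\langle ab,x\rangle=\langle a\otimes b,\Delta'(x)\rangle$. These are the two categorical inputs I would isolate and upgrade to natural isomorphisms.

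First I would record the untwisted relation. The square \eqref{HopfRel} is the image under $\Hop$ of a Cartesian square of finite sets, so by \autoref{HopfCategory} it satisfies the Beck--Chevalley condition. Taking the right mate over its two downward verticals $m$ and $m^2$, in the sense of \autoref{mate}, replaces them by $\Delta^r$ and $(\Delta^r)^2$ and yields a canonical isomorphism $\Delta^r\circ m\cong\overline m\circ(\Delta^r)^2$ of functors $\CS{2}\to\CS{2}$. This is the categorification of the bialgebra axiom, the two outgoing factors of $\Delta^r$ being distributed onto the paired inputs of $\overline m$ with the help of the symmetry of \autoref{SSHbraiding}.

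Second I would insert $F$. Precomposing is harmless, so applying the right adjoint $j^r_F$ of $i_F$ on the left gives $\Delta^r_F m=j^r_F\Delta^r m\cong j^r_F\,\overline m\,(\Delta^r)^2$. The crucial remaining point is a projection-formula isomorphism $j^r_F\circ\overline m\cong m\circ c_{\Delta^l(F)}$, where $c_{\Delta^l(F)}\colon\CS{4}\to\CS{2}$ caps two of the four factors against the two tensor components of $\Delta^l(F)$; composing on the right with $(\Delta^r)^2$ then identifies $c_{\Delta^l(F)}\circ(\Delta^r)^2$ with the functor $(\Delta^r)^2_{\Delta^l(F)}$ of \eqref{DeltaSquared}, completing the argument. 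I would again obtain this formula as a mate: because $j^r_F$ is the right adjoint of $i_F(-)=F\boxtimes(-)$, commuting the cap past $\overline m$ forces the comultiplication of $F$ to appear, which is precisely the categorical shadow of $\langle ab,x\rangle=\langle a\otimes b,\Delta'(x)\rangle$; the relevant adjoint is $\Delta^l$ since $\Delta'$ is dual to $m$.

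The hard part will be this last step — making the emergence of $\Delta^l(F)$ canonical and checking that its two components land on the correct tensor factors. Concretely, one must verify that the right adjoint of ``tensor with $F$'', when transported through $\overline m$, genuinely splits $F$ along $\Delta^l(F)$ rather than leaving $F$ on a single factor, and that this is compatible with the associativity and symmetry coherences of the Deligne tensor on $\CatAd$. This bookkeeping is what the phrase ``coming from the SSH structure'' is meant to package; once both mates are shown to be natural and their composite well defined, the isomorphism \eqref{deltam} follows.
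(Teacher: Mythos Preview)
Your proposal is correct and matches the paper's approach closely. The paper proves \autoref{th:deltam} (from which the Corollary is immediate) by exhibiting the square \eqref{heisquare} as the vertical composite \eqref{HeisDecomp} of the Hopf square \eqref{HopfRel} with an upper square whose verticals are $i_{\Delta^l(F)}$ and $i_F$, and then taking the right mate of the composite; the $2$-morphism in the upper square is built from the unit $\eta_L\colon \Id\to m\Delta^l$, which is exactly the mechanism by which $\Delta^l(F)$ enters. Your ``projection-formula'' isomorphism $j^r_F\circ\overline m\cong m\circ c_{\Delta^l(F)}$ is precisely the right mate of that upper square (with $c_{\Delta^l(F)}=j^r_{\Delta^l(F)}$), so the only difference is organizational: you take mates of the two squares separately and then compose, while the paper composes the squares and then takes the mate --- these agree since mates respect pasting.
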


The proof of \autoref{th:deltam} is based on using the notion of \emph{mate} and the Beck-Chevalley condition recalled in \autoref{HopfCategory}. Namely the isomorphism in the theorem can be represented as a square of functors
\begin{equation}
\label{heisbcsquare}
\tik
\CS{2} \arrow[]{r}[above]{m}  \arrow[shorten >=0.5cm,shorten <=0.5cm,Rightarrow]{dr}[above,sloped]{\sim} & \CCC   \\
\CS{2} \arrow[]{u}{(\Delta^r)^2_{\Delta^l(F)}} \arrow[]{r}[below]{m} & \CCC \arrow[]{u}[right]{\Delta^r_F}
\tak
\end{equation}

That is the right mate of the square 
\begin{equation}
\label{heisquare}
\tik
\CS{2} \arrow[]{d}[left]{m^2_{\Delta^l(F)}} \arrow[]{r}[above]{m} & \CCC \arrow[shorten >=0.4cm,shorten <=0.4cm,Rightarrow]{dl}[above,sloped]{\alpha}  \arrow[]{d}[right]{m_F}\\
\CS{2} \arrow[]{r}[below]{m}& \CCC 
\tak
\end{equation}

The above square can be seen as a composition of the squares 
\begin{equation}
\label{HeisDecomp}
\tik
\CS{2} \arrow[]{d}[left]{i_{\Delta^l(F)}} \arrow[]{r}{m} & \CCC \arrow[shorten >=0.4cm,shorten <=0.4cm,Rightarrow]{dl}[above,sloped]{}  \arrow[]{d}{i_F}\\
\CS{4} \arrow[]{r}{\overline m} \arrow[]{d}[left]{m^2}& \CS{2} \arrow[]{d}{m}\arrow[shorten >=0.4cm,shorten <=0.4cm,Rightarrow]{dl}[above,sloped]{\sim}\\
\CS{2} \arrow[]{r}{m} & \CCC
\tak
\end{equation}
where the bottom square is \autoref{HopfRel}, the the part of the SSH structure categorifying the bialgebra condition. Hence it satisfies the Beck-Chevalley condition. The 2-morphism in the upper square is constructed using the unit of the adjunction $\eta_L: id \rightarrow m\Delta$. Using this fact one can check that it satisfies the Beck-Chevalley condition as well. 

A general method for constructing morphisms using the SSH structure and higher category theory is outlined in \S 6.3 of \cite{mySSH}.

\begin{Remark}
\label{rem:leftright}
The isomorphism $\beta$ is constructed by taking the right mate. Hence to construct it we have to use both the left and right adjoints of the multiplication. To reconstruct Khovanov's categorification of the Heisenberg algebra of \autoref{KhovCalculus} we must assume these adjoints to be equal. However this might prove useful for categorifications of Heisenberg doubles in the cases when they are not.
\end{Remark}

%In \autoref{ssec:FockP} we will apply our general construction to the case of the SSH category $\PP$ of polynomial functors and explicitly describe the $1$- and $2$-morphisms given by it in this example. 

\section{Heisenberg categorification via the graphical calculus}
\label{KhovanovHeisenberg}
In this chapter we recall the definition of the Heisenberg category $\KH$ from \cite{Khov}. On the algebra level we start from the algebra generated over $\ZZ$ by two families of elements $a_n,b_n,n\in\NN$ where $a_0=b_0=1$, with $a_i, a_j$ and $b_i,b_j$ commuting for all $i,j \in \NN$ and with the relations 
\begin{equation*}
[a_m,b_n]=b_{n-1}a_{m-1}
\end{equation*}
As mentioned in \autoref{Heisenbergdouble} via the canonical embedding of the Heisenberg double $\Heis{\Lambda}$ into $\End_{\ZZ}(\Lambda)$ these generators can be thought of as the operators $m_{a_i}, \Delta_{b_i}$, where $a_i$ are elementary and $b_i$ complete homogeneous symmetric functions. Under the isomorphism $\Lambda\cong \bigoplus_n K(\Rep(S_n))$ $a_i$ and $b_i$ correspond to the classes of representations of $S_i$ which we will denote $\Lambda^i$ and $S^i$ (those are respectively the sign and trivial representations). Note that $a_1=b_1$ go under this isomorphism to the identity representation of $S_1$. The multiplication and comultiplication go to endomorphisms of $\bigoplus K(\Rep(S_n))$ induced by the functors of induction and restriction. 

To construct a categorification of this algebra start from considering a monoidal category $\KH'$ whose objects are sequences of $+$'s and $-$'s, with product given by concatenation of sequences. On the level of $K$-groups $+$ corresponds to multiplication by $a_1$  and $-$ to comultiplication by $b_1$ and the concatenation of sequences to composition.

The spaces of morphisms between these objects are constructed by looking at the categories $\Rep(S_n)$ for all $n$, since the objects of $\KH'$ can be naturally seen as corresponding to compositions of induction and restriction functors from $\Rep(S_n\times S_1))$ to $\Rep(S_{n+1})$ for some $n$. 

These spaces can be described using the graphical calculus for the biadjoint functors modulo relations: the $1$-morphisms in $\KH'$ are the spaces of oriented one-dimensional manifolds up to boundary isotopies and a set of relations. Start from the identity morphisms: they are drawn as lines
\[
\stik{1}{- \& +\ar[leftarrow]{d}  \\- \ar[leftarrow]{u} \& +} 
\]

Generally, the diagrams should be understood as morphisms from the bottom boundary to the top boundary, and are composed accordingly. If the source or the target are unit object of $\KH'$ the corresponding diagram has no lower or upper boundary. 

The two pairs of unit and counit of adjunctions are drawn as half-circles
\begin{equation}
\label{diag:unitcounit}
\stik{1}{- \ar[bend right=90]{r} \& + \& + \ar[bend left=90]{r}  \& - \&,\& - \& + \ar[bend right=90]{l}\& + \& - \ar[bend left=90]{l}}
\end{equation}
We also have the upward pointing "cross" diagram  
\begin{equation}
\label{diag:ucross}
\stik{1}{+ \& + \\ + \ar{ur} \& +  \ar{ul}} 
\end{equation}
which denotes the natural endomorphism of $\ind_{S^n\times S^1\times S^1}^{S^{n+2}}(\rho,\Lambda^1,\Lambda^1)$ given by transposing the two copies of $\Lambda^1$. 
\begin{Remark}
\label{crossmates}
Thinking about the above endomorphism as as a square 
\[
\tik
\CCC \arrow[]{r}{+} \arrow[]{d}[left]{+}& \CCC \arrow[]{d}{+}\arrow[shorten >=0.4cm,shorten <=0.4cm,Rightarrow]{dl}[above,sloped]{\sim}\\
\CCC \arrow[]{r}{+} & \CCC
\tak
\]
the other cross diagrams
\begin{equation}
\label{diag:lrcrosses}
\stik{1}{+ \& - \\ - \ar[leftarrow]{ur} \& +  \ar{ul}} \stik{1}{}
\stik{1}{- \& + \\ + \ar{ur} \& -  \ar[leftarrow]{ul}} \stik{1}{}
\stik{1}{- \& - \\ - \ar[leftarrow]{ur} \& -  \ar[leftarrow]{ul}} 
\end{equation}
are the \emph{mates} (see \autoref{mate}) of the upward facing cross in various directions. This is because they are the result of composing the upward cross with the approporiate unit and counit diagrams.
\end{Remark}
The space of $1$-morphisms in $\KH'$ generated by the above diagrams modulo isotopies and the following relations:

\begin{equation}
\label{KhovRel1}
\stik{1}{
- \ar[bend right=90]{r} \& + \ar[bend right=90]{l}
}
= \Id_{\monunit}
\stik{1}{}
,
\stik{1}{}
\stik{1}{- \& + \\
+ \ar{ur} \& -  \ar[leftarrow]{ul} \\
- \ar[leftarrow]{ur} \& + \ar{ul}
} 
=
\ttstik{1}{- \& + \\
{}\& {} \\
- \ar[leftarrow]{uu} \& + \ar{uu}}
-
\ttstik{1}{- \ar[bend right=90]{r}\& + \\
{} \& {} \\
-  \& + \ar[bend right=90]{l}
} 
\end{equation}
\begin{equation}
\label{KhovRel2}
\stik{1}{+ \& - \\
- \ar[leftarrow]{ur} \& +  \ar{ul} \\
+ \ar{ur} \& - \ar[leftarrow]{ul}
} 
=
\ttstik{1}{
+ \& - \\
{} \& {} \\
+\ar{uu} \& - \ar[leftarrow]{uu}
}
\stik{1}{}
,
\stik{1}{}
\stik{1}{
{} \& {} \& + \\
- \& + \ar[bend right=90]{l} \& + \ar{u} \\
- \ar[bend right=90]{r} \ar[leftarrow]{u} \& + \ar{ur} \& + \ar{ul}\\
{} \& {} \& + \ar{u}
}
=0
\end{equation}
\begin{equation}
\label{KhovRel3}
\stik{1}{+ \& + \\
+ \ar{ur} \& +  \ar{ul} \\
+ \ar{ur} \& + \ar{ul}
} 
=
\ttstik{1}{
+ \& + \\
{} \& {} \\
+\ar{uu} \& + \ar{uu}
}
\stik{1}{}
,
\stik{1}{}
\nttstik{1}{+ \& + \& + \\
+ \ar{ur} \& +  \ar{ul}\& + \ar{u} \\
+ \ar{u} \& + \ar{ur}\& + \ar{ul}
\\
+ \ar{ur} \& +  \ar{ul}\& + \ar{u}
} 
=
\nttstik{1}{+ \& + \& + \\
+ \ar{u} \& + \ar{ur}\& + \ar{ul} \\
+ \ar{ur} \& +  \ar{ul}\& + \ar{u}
\\
+ \ar{u} \& + \ar{ur}\& + \ar{ul}
} 
\end{equation}

As we saw $m_{a_1}, \Delta_{b_1}$ and their compositions correspond to elements in $K(\KH')$, but to get objects corresponding to $m_{a_i},\Delta_{a_i}$ for $i\geq 2$ one needs to consider the Karoubi envelope of $\KH'$, which is denoted by $\KH$. Sending $m_{a_i}, \Delta_{b_i}$ to the classes of objects of $\KH$ given by the anti-symmetrization and symmetrization idempotents in $\End(+^i)$ and $\End(-^i)$ defines a map $\gamma:\Heis(\Lambda)\rightarrow K_0(\KH)$. Denote these objects by $m_{\Lambda^i}$ and $\Delta_{S^i}$ (Thus, in particular we now denote $+$ and $-$ by $m_{\Lambda^1}$,$\Delta_{S^1}$). To show that $\gamma$ is a morphism of algebras we need to construct for all $i$ the isomorphisms 
\begin{align}
\label{khovdeltam}
\Delta_{S^i}m_{\Lambda^i}\cong m_{\Lambda^i} \Delta_{S^i}\bigoplus m_{\Lambda^{i-1}}\Delta_{S^{i-1}}
\end{align}

For $i=1$ $S^1=\Lambda^1=\Id$. Also $m_{\Lambda^0}$ is an identity map. Hence above is just the isomorphism 
\begin{equation}
\label{KhovIso}
\Delta_{\Id}m_{\Id}\cong m_{\Id} \Delta_{\Id}\bigoplus \Id
\end{equation}
The planar diagram representing this isomorphism is easy to construct using the graphical calculus for $\KH'$ that we just described. Specifically, it is equivalent to the rows \eqref{KhovRel1} and \eqref{KhovRel2} in the list. The construction for $i\geq 2$ involves the more complicated graphical calculus for the Karoubi envelope $\KH$ whose existence is implied in \cite{Khov}. There it is further shown that $\gamma$ is injective. That $\gamma$ is in fact an isomorphism is a conjecture.

\section{Categorical Heisenberg double and Khovanov's graphical calculus}
\label{KhovCalculus}
Let $\CCC$ be a $k$-linear semisimple abelian category with a symmetric selfadjoint Hopf structure in the sense of \autoref{HopfCategory}. Through this section we will assume that the left and right adjoints of the multiplication functor $m:\CCC\otimes\CCC\rightarrow\CCC$ are equal, and denote $\Delta^r=\Delta^l$ just by $\Delta$. We would like to relate our categorfication of the Heisenberg algebra via the Heisenberg double construction to the categorification of Khovanov in \cite{Khov}. 

The first thing we note is that our categorification doesn't depend on the choice of adjunction data. For any choice of units and counits of adjunction we can construct the canonical isomorphism from \autoref{th:deltam}. In the construction of \cite{Khov} one has the first relation of \eqref{KhovRel1} between the left unit and the right counit of adjunction. In what follows we will carefully analyze the construction of the isomorphism from \autoref{th:deltam} to prove the following statement
\begin{Theorem}
Given adjunction data satisfying relations \eqref{KhovRel1} for an SSH category $\CCC$ 
we can construct a faithful functor \[A:\KH\rightarrow\EndAd{}{\CCC}\] 
where $\EndAd{\CCC}$ of is the category of endofunctors admitting adjoints. 
\end{Theorem}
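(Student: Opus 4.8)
The plan is to construct $A$ first on Khovanov's presented category $\KH'$, then extend it across the Karoubi envelope, and finally confront faithfulness. Writing $X := \Lambda^1 = S^1 = \Id$ for the degree-one generator of $\CCC$ and $\Delta = \Delta^l = \Delta^r$ for the common adjoint of $m$, I would send the object $+$ to the endofunctor $m_X$ and the object $-$ to $\Delta_X$, and a general sequence of signs to the corresponding composite. Since concatenation of sequences is composition of functors, this is manifestly functorial on objects, and since $m_X$ and $\Delta_X$ admit adjoints by the SSH structure, the image lands in $\EndAd(\CCC)$.

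Next I would define $A$ on the generating $2$-morphisms. The cups and caps of \eqref{diag:unitcounit} go to the units and counits of the adjunction $m_X \dashv \Delta_X$ furnished by the hypothesis, and the upward crossing \eqref{diag:ucross} goes to the symmetry natural transformation of the symmetric monoidal structure on $\CCC$ from \autoref{SSHbraiding}, presented as the square in \autoref{crossmates}. The remaining crossings of \eqref{diag:lrcrosses} are then forced: by \autoref{crossmates} they are the mates of the upward crossing, so they are determined once the adjunction data is fixed. The heart of the construction is checking that $A$ respects the defining relations. The first relation of \eqref{KhovRel1} holds by hypothesis. The remaining relation of \eqref{KhovRel1} together with \eqref{KhovRel2} encode, as noted in the text, the $i=1$ isomorphism \eqref{KhovIso}, which is the specialization of \autoref{th:deltam}: I would identify its left side with the composite built from the Beck-Chevalley isomorphism of \eqref{HopfRel} through the decomposition \eqref{HeisDecomp}, and its right side with $m_X\Delta_X \oplus \Id$. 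The two relations \eqref{KhovRel3} are the symmetry axioms $s^2 = \Id$ and the Yang-Baxter identity for the braiding, which hold precisely because the monoidal structure of \autoref{SSHbraiding} is symmetric; the residual parts of \eqref{KhovRel2} then follow from the triangle identities for the adjunction combined with the mate descriptions of the crossings.

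Having a functor $A\colon \KH' \to \EndAd(\CCC)$, I would extend it to the Karoubi envelope. Since $\CCC$ is semisimple abelian and hence idempotent complete, the additive category $\EndAd(\CCC)$ is idempotent complete as well, so the universal property of the Karoubi envelope yields a unique extension $A\colon \KH \to \EndAd(\CCC)$; concretely it sends the symmetrizing and antisymmetrizing idempotents cutting out $m_{\Lambda^i}$ and $\Delta_{S^i}$ to the corresponding idempotent endofunctors.

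The main obstacle is faithfulness. The construction so far only guarantees that the relations of $\KH$ hold in the image; faithfulness is the stronger assertion that no further relations are imposed, i.e. that $A$ is injective on each $\Hom$ space. I would invoke Khovanov's basis theorem for the morphism spaces of $\KH$ and show that a chosen normal-form basis of diagrams is sent to a linearly independent family of natural transformations. The natural route is to evaluate these transformations on the generators $\Lambda^n \in \CCC$ and reduce their independence to the faithfulness of the symmetric-group actions already present in the endomorphism algebras $\End(m_X^{\circ n})$, which recover the group algebras $\kk[S_n]$ through the SSH structure. Controlling the images of arbitrary diagrams, rather than only the generators, and excluding collapses in high degree is the delicate step of the whole argument.
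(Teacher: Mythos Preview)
Your overall architecture matches the paper's: define $A$ on $\KH'$ by sending $+,-$ to $m_P,\Delta_P$ for an irreducible primitive $P$, send cups and caps to the adjunction units and counits, send the upward cross to the symmetric braiding $\beta$, deduce \eqref{KhovRel3} from symmetry, and extend to the Karoubi envelope. (Your object $X=\Lambda^1=S^1=\Id$ only makes sense in the specific category $\PP$; the paper works with a general irreducible primitive $P$, but this is a minor adjustment.)

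The genuine gap is in your treatment of \eqref{KhovRel1} and \eqref{KhovRel2}. You write that these relations ``encode'' the isomorphism \eqref{KhovIso} and that \autoref{th:deltam} supplies such an isomorphism, so the relations hold. But the relations are not merely the assertion that \emph{some} splitting $\Delta_P m_P\cong m_P\Delta_P\oplus\Id$ exists; they say that the \emph{specific} morphisms $\eta_R$, $\epsilon_L$, and the mates $\epsilon_R\beta\eta_R$, $\epsilon_L\beta\eta_L$ realise that splitting. The paper's actual work is to unpack the canonical isomorphism of \autoref{th:deltam} as the explicit composite \eqref{HeisIsoDet} and then, using primitivity and irreducibility of $P$, to identify its components term by term: this is \autoref{lemmaepsilon}, which computes $\epsilon_R^{\Delta(P)}$ on the four summands of \eqref{3line} and shows the injections are exactly $\eta_R$ and $\epsilon_R\beta\eta_R$. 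Only after this identification does the hypothesis on \eqref{KhovRel1} (reformulated in \autoref{epsilonL} as the condition that $\epsilon_L$ be inverse to $\eta_R$ on $\Id$ and zero on $m_P\Delta_P$) let one verify the first relation of \eqref{KhovRel2} by the direct computation the paper carries out. Your sentence ``the residual parts of \eqref{KhovRel2} then follow from the triangle identities'' is not correct: the triangle identities alone do not force the left mate followed by the right mate to be the identity. You need the explicit matching of the SSH splitting with the Khovanov generators, and that is precisely the content you have skipped.

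On faithfulness: the paper does not actually supply an argument, so your sketch via Khovanov's basis theorem and evaluation on objects is additional content rather than something to compare against.
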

Moreover such a functor can be constructed for every irreducible \emph{primitive} element of $\CCC$. Denote the image of $\kk \in \Vect$ under the unit functor $m_{\emptyset}:\Vect \rightarrow \CCC$ by $\one$ (see \autoref{SSHunit}).
\begin{Definition}
An object $P$ of $\CCC$ is called \emph{primitive} if $\Delta P\cong P \boxtimes \one \oplus \one\boxtimes P $ 
\end{Definition}
This definition is motivated by the notion of  primitive element in a Hopf algebra.

We would like to construct a functor $A:\KH\rightarrow\EndAd{}{\CCC}$ such that $A(+)=m_P$, $A(-)=\Delta_P$ for $P$ - an irreducible primitive object in $\CCC$. Recalling the construction of $\KH$ from \autoref{KhovanovHeisenberg} we immediately see that it is enough to define the functor $F$ on the category $\KH'$ such that $\KH$ is its Karoubi envelope. Therefore the value of $A$ on the objects of $\KH$ is defined by our condition and it remains to define $A$ on morphisms. $\Delta_P$ is left-and right-adjoint to $m_P$. The diagrams representing identity morphisms and units and counits of adjunctions in $\KH'$ \eqref{diag:unitcounit} should go to the corresponding morphisms in $\EndAd{\CCC}$. Denote the images of these by $\eta_L, \epsilon_L$ and $\eta_R, \epsilon_R$ respectively.

Since all crosses are mates by \autoref{crossmates} it remains to specify an image of one of them. The SSH category $\CCC$ is symmetric monoidal (Proposition 3.18 of \cite{mySSH}). Let the upward cross \eqref{diag:ucross} go to the isomorphism $\beta:m_Pm_P\rightarrow m_Pm_P$ arising from the symmetric monoidal structure on $\CCC$. We immediately see the relations \eqref{KhovRel3} follow from the fact that $\beta$ is a symmetric braiding. 

It remains to verify the relations \eqref{KhovRel1} and \eqref{KhovRel2}. In Khovanov's construction these relations are equivalent to the isomorphism \eqref{KhovIso}. Namely the splitting \eqref{KhovIso} in this construction is given by the pairs of morphisms
\begin{equation*}
\stik{1}{- \ar[bend right=90]{r} \& + \& - \& + \ar[bend right=90]{l} \&,\&}
\stik{1}{}
\stik{1}{- \& + \\ + \ar{ur} \& -  \ar[leftarrow]{ul}} \stik{1}{}
\stik{1}{+ \& - \\ - \ar[leftarrow]{ur} \& +  \ar{ul}} \stik{1}{}
\end{equation*}

$F$ sends these morphisms to the pairs $\eta_L, \epsilon_R$
and the left and right mates of $\beta$: $\epsilon_L\beta\eta_L$ and $\epsilon_R\beta\eta_R$. 

Recall that by \autoref{th:deltam} the self-adjoint Hopf structure on $\CCC$ implies for any pair of objects $X,Y \in \CCC$ the existence of a canonical isomorphism 
\begin{equation}
\label{HeisIso}
\Delta^r_Xm_Y\cong m\circ (\Delta^r)^2_{\Delta(X)}\circ i_Y
\end{equation}
Setting $X=P, Y=P$ in \autoref{HeisIso} we get 
\begin{equation}
\label{Delta2Primitive}
\Delta^2_{\Delta P}\circ i_P=(\Delta)^2_{\one\boxtimes P \oplus P\boxtimes\one}\circ i_P=P\boxtimes\Delta_P\oplus\one\boxtimes\Id\end{equation}
Hence we obtain:
\begin{equation}
\label{HeisSplitting}
\Delta_Pm_P\cong m\circ (\Delta)^2_{\Delta(P)}\circ i_P\cong m\circ(\Delta)^2_{P\boxtimes\one\oplus\one\boxtimes P}\circ i_P=m_P\Delta_P\oplus\Id
\end{equation}
This is precisely the splitting of the form \eqref{KhovIso}.
\begin{Theorem}
\label{KhovanovAction}
For an irreducible primitive object $P$ of an SSH category $\CCC$ the isomorphism \autoref{HeisSplitting} is equivalent to relations \eqref{KhovRel1} and \eqref{KhovRel2} iff the left counit of adjunction $\epsilon_L:\Delta_P m_P\cong m_P \Delta_P\oplus\Id\rightarrow \Id$ is the inverse of $\eta_R$ on the $Id$ component and $0$ on $m_P\Delta_P$.
\end{Theorem}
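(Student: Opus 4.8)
The plan is to read the relations \eqref{KhovRel1} and \eqref{KhovRel2} as the defining identities of a biproduct decomposition and to compare them against the canonical splitting \eqref{HeisSplitting}. From \eqref{HeisSplitting} I would fix the four structure maps of the direct sum $\Delta_P m_P \cong m_P\Delta_P\oplus\Id$: writing $\iota_+,\pi_+$ for the inclusion and projection of the $m_P\Delta_P$ summand and $\iota_0,\pi_0$ for those of the $\Id$ summand. Under the functor $A$ the two mixed crosses of \eqref{diag:lrcrosses} are sent, via \autoref{crossmates}, to $\iota_+$ and $\pi_+$, while the cup and cap are sent to $\eta_R$ and $\epsilon_L$; the point is then to determine when these last two may serve as the structure maps $\iota_0,\pi_0$ of the $\Id$ summand. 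With these identifications every Khovanov relation becomes a statement about one entry of the ``matrix'' $\pi_\alpha\iota_\beta$ of the biproduct, together with the completeness identity $\iota_+\pi_+ + \eta_R\epsilon_L = \Id_{\Delta_P m_P}$.

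First I would dispose of the relations that hold for any choice of adjunction data. The relations \eqref{KhovRel3} and the first equality of \eqref{KhovRel2}, which assert that the two mixed crosses compose to the identity $\pi_+\iota_+ = \Id_{m_P\Delta_P}$, involve only $\beta$ and follow from $\beta$ being a symmetric braiding, exactly as already observed in the excerpt for \eqref{KhovRel3}. The two relations that do constrain the adjunction data are precisely the ones mentioning the cup and cap: the ``circle'' (first equality of \eqref{KhovRel1}) unwinds to $\epsilon_L\circ\eta_R = \Id$, i.e. $\epsilon_L$ inverts $\eta_R$ on the $\Id$ component, and the ``curl'' (second equality of \eqref{KhovRel2}) unwinds, after stripping the through-strand by adjunction, to $\epsilon_L\circ\iota_+ = 0$, i.e. $\epsilon_L$ vanishes on the $m_P\Delta_P$ component. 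These are exactly the two conditions in the statement.

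It then remains to see that the completeness identity (second equality of \eqref{KhovRel1}) carries no further information once these two conditions hold, and this is where irreducibility and primitivity of $P$ enter. Primitivity gives, through \eqref{Delta2Primitive}, the clean splitting \eqref{HeisSplitting} with a single $\Id$ summand, while irreducibility, via $\End(P)\cong\kk$ and the resulting vanishing $\Hom(\one, m_P\Delta_P)=0$, forces $\pi_+\eta_R = 0$ and pins the image of $\eta_R$ to be exactly that $\Id$ summand. Consequently, given $\epsilon_L\eta_R = \Id$ and $\epsilon_L\iota_+ = 0$, the endomorphisms $\eta_R\epsilon_L$ and $\iota_+\pi_+$ are orthogonal idempotents onto the two summands, so $\iota_+\pi_+ + \eta_R\epsilon_L = \Id_{\Delta_P m_P}$ holds automatically. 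Both directions of the iff follow: if the two conditions hold then all of \eqref{KhovRel1} and \eqref{KhovRel2} hold, and conversely the first equality of \eqref{KhovRel1} and the second equality of \eqref{KhovRel2} return the two conditions verbatim.

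The step I expect to be the main obstacle is the identification used implicitly in the first paragraph: that the mixed crosses, which are defined as mates of the braiding $\beta$, literally realize the inclusion and projection $\iota_+,\pi_+$ of the $m_P\Delta_P$ summand of the \emph{canonical} isomorphism \eqref{HeisSplitting} produced by the right-mate construction of \autoref{th:deltam}. Pinning this down means tracing the unit $\eta_L\colon \mathrm{id}\to m\Delta$ used to build the $2$-morphism in the upper square of \eqref{HeisDecomp} through the Beck-Chevalley isomorphism and comparing the result, using biproduct uniqueness in the semisimple category $\CCC$, with the cup/cross/cap data; once this compatibility is in place, the relation-by-relation bookkeeping above is routine.
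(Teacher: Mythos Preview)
Your overall framework---reading \eqref{KhovRel1} and \eqref{KhovRel2} as biproduct identities for $\Delta_P m_P \cong m_P\Delta_P\oplus\Id$ and matching the diagrammatic generators against the structure maps $\iota_+,\pi_+,\iota_0,\pi_0$---is the same as the paper's, and your analysis of the circle and curl relations as $\epsilon_L\eta_R=\Id$ and $\epsilon_L\iota_+=0$ is exactly \autoref{epsilonL}. The difference is in how the identification of the crosses with $\iota_+,\pi_+$ is established and, in particular, in your treatment of the first equality of \eqref{KhovRel2}.

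There is a genuine gap in the sentence ``the first equality of \eqref{KhovRel2} \ldots\ involve[s] only $\beta$ and follow[s] from $\beta$ being a symmetric braiding.'' That relation is the composite of the \emph{left} mate $\epsilon_L\beta\eta_L$ with the \emph{right} mate $\epsilon_R\beta\eta_R$, so it involves all four adjunction morphisms and in particular depends on the left adjunction data; it is not a consequence of $\beta^2=\Id$. The paper in fact spends most of the proof verifying exactly this relation (the passage beginning ``Now let us check that the relation \eqref{rightleftrelation} is satisfied for such left adjunction data''), and the verification uses the hypothesis on $\epsilon_L$ in an essential way, via the unit--counit identity and the explicit description of how $\eta_L$ and $\epsilon_L$ interact with the summands.

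This is related to a subtlety in your identification of the crosses with $\iota_+,\pi_+$. The paper's computation \eqref{HeisIsoDet}, together with \autoref{lemmaepsilon}, shows that the \emph{injections} of the canonical splitting \eqref{HeisSplitting} are $\eta_R$ and the right mate $\epsilon_R\beta\eta_R$; this uses only the right adjunction and holds for any choice of $\epsilon_L$. But nothing so far forces the \emph{left} mate $\epsilon_L\beta\eta_L$ to be the projection $\pi_+$: that statement is equivalent to the first equality of \eqref{KhovRel2}, which you have not yet checked. So you cannot invoke $\pi_+\iota_+=\Id$ as a formal biproduct identity before this point---doing so is circular. Once you fill this in (using the condition on $\epsilon_L$), your concluding argument that the completeness identity carries no further information is fine, and the paper agrees, deriving the second equality of \eqref{KhovRel1} as an immediate consequence at the very end.
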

\begin{proof}
To check that the relations \eqref{KhovRel1} and \eqref{KhovRel2} hold it remains to show that the splitting \eqref{HeisSplitting} coming from the Hopf structure on $\CCC$ is given by the pairs $\eta_L, \epsilon_R$
and the left and right mates of $\beta$: $\epsilon_L\beta\eta_L$ and $\epsilon_R\beta\eta_R$ for the primitive element $P$ of $\CCC$

The isomorphism \eqref{HeisSplitting} is obtained by taking the right mate of the square:
\begin{equation}
\tik
\CS{2} \arrow[]{d}[left]{m^2_{\Delta(P)}} \arrow[]{r}[above]{m} & \CCC \arrow[shorten >=0.3cm,shorten <=0.3cm,Rightarrow]{dl}[above,sloped]{\alpha}  \arrow[]{d}[right]{m_P}\\
\CS{2} \arrow[]{r}[below]{m}& \CCC 
\tak
\end{equation}
Explicitly, it is the following composition:
\begin{align}
\label{HeisIsoDet}
\begin{split}
m_P\Delta_P\oplus\Id&=m(\Delta)^2_{\Delta(P)} i_P\\ &\xrightarrow{\eta_R}\Delta_Pm_Pm(\Delta)^2_{\Delta(P)}i_P\\
&\xrightarrow{\alpha} \Delta_Pmm^2_{\Delta(P)}(\Delta)^2_{\Delta(P)}i_P\\
&\xrightarrow{\epsilon^{\Delta(P)}_R} \Delta_Pmi_P
=\Delta_Pm_P
\end{split}
\end{align}
where $\epsilon_R^{\Delta(P)}$ is the counit of the adjunction $m^2_{\Delta (P)}\dashv\Delta^2_{\Delta (P)}$.

Since $P$ is primitive we can expand the third line of this sequence of morphisms:
\begin{align}
\label{3line}
\begin{split}
\Delta_Pmm^2_{\Delta_P}(\Delta)^2_{\Delta(P)}i_P=\Delta_Pm\biggl(&m(\one\boxtimes P)\boxtimes m(P\boxtimes\Delta_P)\\&\oplus m(\one\boxtimes\one)\boxtimes m(P\boxtimes\Id) \\
&\oplus m(P\boxtimes P)\boxtimes m(\one\boxtimes\Delta_P)\\
&\oplus m(P\boxtimes\one)\boxtimes m(\one\boxtimes\Id)\biggr)
\end{split}
\end{align}
\begin{Lemma}
\label{lemmaepsilon}
$\epsilon_R^{\Delta(P)}$ is zero on the second and third component of this direct sum. Its restriction on the first component is given by $\epsilon_P^R$ and restriction on the fourth component is $c\Id$ where $c$ is a constant.  
\end{Lemma}
Looking at the second line of \autoref{HeisIsoDet}:
\begin{align*}
\Delta_Pm_Pm(\Delta)^2_{\Delta(P)}i_P=
\Delta_Pm_Pm(P\boxtimes\Delta_P\oplus\one\boxtimes\Id)
\end{align*}
we see that the image of $\Id$ under $\eta_R$ is contained in the second component of this direct sum. Hence
$\alpha\eta_R\left(\Id\right)$ is contained in the sum of the second and fourth components of \autoref{3line}. By \autoref{lemmaepsilon} we have  $\epsilon_R^{\Delta}\alpha\eta_R\left(\Id\right)=\eta_R\left(\Id\right)$ (multiplying $\epsilon_R^{\Delta(P)}$ by a constant if needed). 

Likewise, as $\alpha\eta_R\left(m_P\Delta_P\right)$ is contained in the sum of the first and third components and the restriction of $\epsilon_R^{\Delta(P)}$ to the first component is equal to $\epsilon_R$ by \autoref{lemmaepsilon} we obtain $\epsilon_R^{\Delta (P)}\alpha\eta_R\left(m_P\Delta_P\right)=\epsilon_R\beta\eta_R$. Here $\beta:m_Pm_P\rightarrow m_Pm_P$ is the braiding and from the decomposition of $\alpha$ from \autoref{HeisDecomp} it follows that the restriction of $\alpha$ to the image of $m_P\Delta_P$ is equal to $\beta$. Hence the injection morphisms for the splitting \eqref{HeisSplitting} are given by the images of the injection morphisms in $\KH'$. 

The following then follows immediately from analyzing the relations in $\KH'$:
\begin{Proposition}
\label{epsilonL}
The relations
\[
\stik{1}{
- \ar[bend right=90]{r} \& + \ar[bend right=90]{l}
}
= \Id_{\monunit}
\stik{1}{}
,
\stik{1}{}
\stik{1}{
{} \& {} \& + \\
- \& + \ar[bend right=90]{l} \& + \ar{u} \\
- \ar[bend right=90]{r} \ar[leftarrow]{u} \& + \ar{ur} \& + \ar{ul}\\
{} \& {} \& + \ar{u}
}
=0
\]
between the morphisms in $\KH'$ are satisfied between their images under $A$ iff we have that $\epsilon_L:\Delta_P m_P\cong \Id \oplus m_P \Delta_P\rightarrow \Id$ is the inverse of $\eta_R$ on the $\Id$ component and $0$ on $m_P\Delta_P$. 
\end{Proposition}

Now let us check that the relation
\begin{equation}
\label{rightleftrelation}
\stik{1}{+ \& - \\
- \ar[leftarrow]{ur} \& +  \ar{ul} \\
+ \ar{ur} \& - \ar[leftarrow]{ul}
} 
=
\ttstik{1}{
+ \& - \\
{} \& {} \\
+\ar{uu} \& - \ar[leftarrow]{uu}
}
\end{equation}
is satisfied for such left adjunction data.

Let us write out the image of \autoref{rightleftrelation} under $A$:
\begin{align*}
\begin{split}
m_P \Delta_P&\xrightarrow{\epsilon_R\beta\eta_R}\\
&\Delta_P m_P\cong\Id \oplus m_P\Delta_P \xrightarrow{\eta_L} \\ 
&(\Id \oplus m_P\Delta_P)m_P\Delta_P\cong m_P\Delta_P\oplus m_P\Delta_P \oplus m_P^2\Delta_P^2 \xrightarrow{\beta}\\ 
& m_P\Delta_P\oplus m_P\Delta_P \oplus m_P^2\Delta_P^2 \xrightarrow{\epsilon_L}\\ 
& m_P \Delta_P
\end{split}
\end{align*}
We need to check that this composition is equal to identity. As we saw above $\epsilon_R\beta\eta_R$ takes $m_P\Delta_P$ to $m_P\Delta_P$ in the direct sum decomposition of $\Delta_P m_P$. Next composing $\eta_L$ with this we get 
\begin{align}
\begin{split}
\label{etaL}
&\Id\oplus m_P\Delta_P\xrightarrow{\eta_L} \\&m_P\Delta_P\oplus m_P\Delta_P m_P\Delta_P \cong m_P(\Id\oplus m_P\Delta_P)\Delta_P \cong m_P\Delta_P\oplus m_P\Delta_P \oplus m_P^2\Delta_P^2
\end{split}
\end{align}
From the definition of $\epsilon_L$ in \autoref{epsilonL} and the unit-counit relation 
\begin{align}
  \Delta_P &\xrightarrow{\Delta_P\eta_L}\Delta_P m_P \Delta_P\cong (\Id\oplus m_P\Delta_P)\Delta_P\xrightarrow{\epsilon_L\Delta_P} \Delta_P  
\end{align}
we deduce that the composition of $\eta_L$ with projection to the second $m_P\Delta_P$ in the direct sum decomposition is $\Id$.
Next, $\beta$ switches the first and second copies of $m_P\Delta_P$ in the direct sum decomposition.
Applying $\epsilon_L$ we get 
\begin{align}
m_P\Delta_P\oplus m_P\Delta_P \oplus m_P^2\Delta_P^2\cong(\Id\oplus m_P\Delta_P) m_P\Delta_P\xrightarrow{\epsilon_L}m_P\Delta_P
\end{align}
where $\epsilon_L$ by definition is the projection on the $\Id$ component.
Altogether we have shown that the composition is identity. 

The last remaining relation 
\[
\stik{1}{- \& + \\
+ \ar{ur} \& -  \ar[leftarrow]{ul} \\
- \ar[leftarrow]{ur} \& + \ar{ul}
} 
=
\ttstik{1}{- \& + \\
{}\& {} \\
- \ar[leftarrow]{uu} \& + \ar{uu}}
-
\ttstik{1}{- \ar[bend right=90]{r}\& + \\
{} \& {} \\
-  \& + \ar[bend right=90]{l}
} 
\]
then follows immediately. 
\end{proof}
%\begin{Conjecture}
%$\epsilon_L$ is part of adjunction data.
%\end{Conjecture}
%This implies
%\begin{Conjecture}
%Let $\CCC$ be a symmetric self-adjoint Hopf category with associated Heisenberg double action \lena{Is that how we call it?}. Then we can construct a faithful functor \[F:\KH\rightarrow\EndAd{}{\CCC}\] 
%\end{Conjecture}
\begin{proof}[Proof of the \autoref{lemmaepsilon}]
We have for every $G\boxtimes F$:
\begin{align}
\begin{split}
 m^2_{\Delta_P}(\Delta)^2_{\Delta(P)}G\boxtimes F=&G\boxtimes m(P\boxtimes\Delta_P(F))\\&\oplus m(P\boxtimes G)\boxtimes \Delta_P(F) \\
&\oplus \Delta_P(G)\boxtimes m(P\boxtimes F)\\&\oplus m(P\boxtimes \Delta_PG)\boxtimes F
\end{split}
\end{align}
$\epsilon_R^{\Delta(P)}$ maps this to $G\boxtimes F$ and is a grade preserving morphism. Hence it is non-zero only the first and last components. There it is given by $\Id\boxtimes\epsilon_R$ and $\epsilon_R\boxtimes\Id$ respectively.

If we set $G=P$, so that $m_P\Delta_P(G)=m(P\boxtimes\Id)=P$ the restriction to the fourth component is $\epsilon_R\boxtimes \Id: P\boxtimes F \rightarrow P\boxtimes F$ hence it is $c\Id\boxtimes \Id$ since $P$ is irreducible.
\end{proof}
\begin{Corollary}
\label{KhovanovActionEpsilonL}
Consider an SSH category $\CCC$ with an irreducible primitive element $P$. Then we have the decomposition $\Delta_P m_P\cong \Id \oplus m_P \Delta_P$ with $\eta_R$ being the embedding $\Id \rightarrow \Id\oplus m\Delta$.  Define the functor $\epsilon_L$  by setting it to be the inverse of $\eta_R$ on the $\Id$ component and $0$ on $m_P\Delta_P$. Then if $\epsilon_L$ is the counit of adjunction, i.e. if it defines an isomorphism for all $X,Y \in \CCC$
\begin{align*}
\Hom(Y,mX) \rightarrow Hom(\Delta Y,X)
\end{align*}
We recover a strong categorical Heisenberg action on $\CCC$ in the sense of \cite{Khov}.
\end{Corollary}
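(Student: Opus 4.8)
The plan is to obtain \autoref{KhovanovActionEpsilonL} by combining the relation check carried out in \autoref{KhovanovAction} with the faithfulness asserted by the opening Theorem of \autoref{KhovCalculus}. The standing hypothesis that the prescribed $\epsilon_L$ is an honest counit of adjunction is exactly what places us in the setting of \autoref{KhovanovAction}: there $\epsilon_L$ is assumed to be a counit and is shown to force Khovanov's relations precisely when it has the form we now impose, namely the inverse of $\eta_R$ on the $\Id$ summand of $\Delta_P m_P\cong\Id\oplus m_P\Delta_P$ and $0$ on $m_P\Delta_P$. So the substance of the proof is to record that this data satisfies all the defining relations of $\KH$, extend the resulting functor to the Karoubi envelope, and then invoke faithfulness.

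First I would verify the relations on $\KH'$. With $\epsilon_L$ chosen as prescribed, \autoref{epsilonL} supplies the first relation of \eqref{KhovRel1} and the second relation of \eqref{KhovRel2}. The remaining two --- the relation \eqref{rightleftrelation} and the second relation of \eqref{KhovRel1} --- are obtained exactly as in the body of \autoref{KhovanovAction}, where the cross diagrams are sent to the left and right mates of the braiding $\beta$ and the composition \eqref{HeisIsoDet}, analyzed through \autoref{lemmaepsilon}, identifies the Hopf-theoretic splitting \eqref{HeisSplitting} with Khovanov's splitting \eqref{KhovIso}. Relations \eqref{KhovRel3} need no new argument: $\beta$ is the symmetric braiding provided by the symmetric monoidal structure on $\CCC$ (\autoref{SSHbraiding}), so the double-crossing and braid identities hold automatically.

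Once \eqref{KhovRel1}--\eqref{KhovRel3} are in place, the assignment $+\mapsto m_P$, $-\mapsto\Delta_P$ on generators, together with the images $\eta_L,\epsilon_L,\eta_R,\epsilon_R$ of the cups and caps and the image $\beta$ of the upward cross, defines a functor $A\colon\KH'\to\EndAd(\CCC)$ compatible with concatenation and composition. I would then extend $A$ along $\KH'\hookrightarrow\KH$ by the universal property of the Karoubi envelope: since $\CCC$ is semisimple abelian, idempotents split objectwise and hence split in $\EndAd(\CCC)$, so this category is idempotent-complete and $A$ extends, uniquely up to isomorphism, to $A\colon\KH\to\EndAd(\CCC)$. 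As the hypotheses of the opening Theorem of \autoref{KhovCalculus} --- adjunction data satisfying \eqref{KhovRel1} --- are now met, the extended functor is faithful, and a faithful functor $\KH\to\EndAd(\CCC)$ is precisely a strong categorical Heisenberg action on $\CCC$ in the sense of \cite{Khov}.

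The step I expect to carry the real weight is faithfulness. Verifying the relations is essentially bookkeeping once \autoref{KhovanovAction} is available, and the Karoubi extension is formal; the genuine content is that $A$ annihilates no nonzero morphism of $\KH$. Although this is inherited here from the opening Theorem rather than re-proved, it is worth recalling how it goes: one compares Khovanov's explicit diagrammatic bases for the $\Hom$-spaces of $\KH$ with the $\Hom$-spaces between the composite endofunctors $m_P^{\,a}\Delta_P^{\,b}$ in $\EndAd(\CCC)$, using the irreducibility and primitivity of $P$ together with the semisimplicity of $\CCC$ to compute the latter, and then matches dimensions degree by degree to conclude injectivity.
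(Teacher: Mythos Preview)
Your proposal is correct and follows essentially the same route as the paper. The paper's own proof is a single sentence: the chosen $\epsilon_L$ satisfies the condition in \autoref{KhovanovAction}, hence the relations hold and the action exists. You unpack this chain explicitly---verifying \eqref{KhovRel1}--\eqref{KhovRel3}, passing to the Karoubi envelope, and invoking the faithfulness statement of the opening Theorem---which is more than the paper writes down but is the intended content behind its one-liner.
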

\begin{proof}
The above definition of $\epsilon_L$ immediately implies \autoref{KhovanovAction}.
\end{proof}

\begin{Example}
The only irreducible primitive element in the category of polynomial functors $\PP$ is the identity functor. Our construction for this element and the SSH structure on $\PP$ from \cite{mySSH} recovers the categorical action of Khovanov's category $\KH$ constructed in \cite{hongyacobiPoly}. 

\end{Example}

\begin{Remark}
\autoref{KhovanovActionEpsilonL} provides a recipe for deriving graphical calculus from the canonical splitting \autoref{HeisSplitting} provided by the symmetric selfadjoint Hopf structure. The right unit $\eta_R$ and the splitting \autoref{HeisSplitting} define the rest of the adjunction data. In particular one can write the formula for the Casimir element.
\end{Remark}

\printbibliography
\end{document}